\theoremstyle{definition}
\newtheorem{proposition}{Proposition}
\newcommand{\vp}{\varphi}
\newcommand{\ZZ}{\mathbb{Z}}
\newcommand{\ti}{\textit}
\long\def\ignore#1{}
\def\nd{\noindent}
\def\ti{\textit}
\def\RR{\mathbb{R}}
\def\ZZ{\mathbb{Z}}
\def\CC{\mathbb{C}}
\begin{document}

\title{A note on fourier eigenfunctions in four dimensions}

\author{Daniel Lautzenheiser}
\begin{abstract}
	In this note, we exhibit a weakly holomorphic modular form for use in constructing a Fourier eigenfunction in four dimensions. Such auxiliary functions may be of use to the D4 checkerboard lattice and the four dimensional sphere packing problem.
\end{abstract}
\keywords{sphere packing, Fourier analysis, modular forms}
\address{Eastern Sierra College Center, 4090 W. Line St, Bishop, CA, 93514}
\thanks{\nd Date: \today.}

\email{daniel.lautzenheiser@cerrocoso.edu}

\maketitle

\section{introduction}

The sphere packing density is the proportion of $ \RR^d $ occupied by non-overlapping unit balls. In recent years, the sphere packing problem of finding the most dense arrangement of spheres in $ \RR^d $ has regained interest. This problem has been recently solved in 8 and 24 dimensions \cite{viazovska2017sphere, cohn2017sphere} by means of constructing a specialized radial Schwartz function using Fourier and complex analytic methods. The sphere packing problem has otherwise only been solved in dimensions 1,2, and 3 \cite{toth1943dichteste, hales2005proof}. A recent work \cite{cohn2022three} conjectures conditions for which the sphere packing problem in dimension 4 may be solved. In this paper, we present a function for possible use toward this conjecture.

If $ \Lambda $ is a lattice in $ \RR^d $ with minimal nonzero vector length $ \rho $, then a sphere packing associated to $ \Lambda $ may be defined by placing spheres of radius $ \rho/2 $ at each lattice point. In this case, there is one sphere for each copy of the lattice fundamental cell $ \RR^d/\Lambda $ and the sphere packing density is the ratio
\begin{align*}
\frac{\text{Vol}(B_{\rho/2}^d)}{\text{Vol}(\RR^d/\Lambda)}.
\end{align*}
Let $ f:\RR^d \rightarrow \CC $ be integrable with Fourier transform
\begin{align*}
\mathcal{F}(f)(\xi) = \widehat{f}(\xi) = \int_{\RR^d} f(x) e^{-2 \pi i x \cdot \xi} dx, \quad \xi\in{\RR^d}.
\end{align*}
If $ f: \RR^d \rightarrow \RR $ is a Schwartz function,  Poisson summation enforces 
\begin{align*}
\sum_{x\in{\Lambda}} f(x) = \frac{1}{\text{Vol}(\RR^d/\Lambda)} \sum_{y\in{\Lambda^*}} \widehat{f}(y)
\end{align*}
where $ \Lambda^* $ is the dual lattice. We can modify Poisson summation to form an inequality
\begin{align*}
f(0) \geq \sum_{x\in{\Lambda}} f(x) = \frac{1}{\text{Vol}(\RR^d/\Lambda)} \sum_{y\in{\Lambda^*}} \widehat{f}(y) \geq \frac{\widehat{f}(0)}{\text{Vol}(\RR^d/\Lambda)}
\end{align*}
if $ f $ is positive definite ($ \widehat{f} \geq 0 $) and $ f(x) \leq 0 $ for all $ ||x|| \geq \rho $. This yields $ \text{Vol}(\RR^d/\Lambda) \geq 1$ if $ \widehat{f}(0) = f(0) $. In light of the lattice sphere packing density, the existence of an auxiliary function $ f $ turning the above inequalities into equalities shows that the $ \Lambda $ lattice sphere packing density is $ \leq \text{Vol}(B_{\rho/2}^d) $. This argument, first put forth in \cite{cohn2003new}, converts the sphere packing problem into an analysis problem. To construct a \ti{magic function} $ f $ in which the above inequalities are tight ($ f $ and $ \widehat{f} $ also vanish on nonzero lattice points), it is typical to write $ f = f_+ + f_- $ where $ \widehat{f_+} = f_+ $ and $ \widehat{f_-} = -f_- $ and $ f_+,f_- $ have sign radius
\begin{align*}
r_f = \inf \{ M \geq 0: f(\{||x|| \geq M\}) \subseteq [0,\infty) \}
\end{align*}
equal to $ \rho $. Minimizing the quantity $ (r_fr_{\widehat{f}})^{1/2} $ is a type of uncertainty principle which is currently only solved in dimension $d=12$ when $\widehat{f}=f$ and dimensions $d\in{\{1,8,24\}}$ when $\widehat{f}=-f$, with the cases $d\in{\{8,24\}}$ corresponding to the $-1$ eigenfunctions constructed in the solving of the sphere packing problems in dimensions 8 and 24 \cite{logan1983extremalII, bourgain2010principe,gonccalves2017hermite,cohn2019optimal,gonccalves2020new}.

We construct a radial Schwartz function $ f_+:\RR^4 \rightarrow \RR $ such that $ \widehat{f_+} = f_+ $ with sign radius 
$ r_{f_+} = \sqrt{2}$, which is also the minimal vector length in the $ D_4 $ checkerboard lattice \cite{conway2013sphere}. $ f_+ $ is not sharp in the sense of minimizing the sign radius since numerical results in \cite{cohn2019optimal} demonstrate auxiliary $ f $ with $ r_f < 0.97 $. It is possible, in light of the slackness conditions imposed in conjecture 6.1 of \cite{cohn2022three} that $ f_+ $ may be of use toward the sphere packing problem in dimension 4.

To make this note more self-contained, we will include some of the details and arguments from \cite{viazovska2017sphere, cohn2017sphere, cohn2019optimal, rolen2020note, radchenko2019fourier}.

\section{a +1 eigenfunction in four dimensions}

We let $ q = e^{2\pi i z} $ with $ \mathrm{Im} (z) >0 $. Define the Jacobi theta series

\begin{align*}
&\Theta_2(z) = \sum_{n\in{\ZZ+\frac{1}{2}}}q^{\frac{1}{2}n^2}, \\
&\Theta_3(z) = \sum_{n\in{\ZZ}} q^{\frac{1}{2}n^2},  \\
& \Theta_4(z) = \sum_{n\in{\ZZ}}(-1)^nq^{\frac{1}{2}n^2}.
\end{align*}
Under the action of $ z \mapsto -1/z $, these theta functions satisfy
\begin{align*}
& \Theta_2(-1/z) = (-iz)^{1/2}\Theta_4(z), \\
& \Theta_3(-1/z) = (-iz)^{1/2} \Theta_3(z), \\
& \Theta_4(-1/z) = (-iz)^{1/2} \Theta_2(z).
\end{align*}
Under the action of $ z \mapsto z+1 $ they satisfy
\begin{align*}
&\Theta_2(z+1) = e^{i \pi/4}\Theta_2(z), \\
& \Theta_3(z+1) = \Theta_4(z), \\
& \Theta_4(z+1) = \Theta_3(z).
\end{align*}
The fourth powers $ \Theta_2^4 $, $ \Theta_3^4 $, $ \Theta_4^4 $ have the following leading terms at the cusp $ i \infty $:
\begin{align*}
& \Theta_2^4 = 16q^{1/2}+64q^{3/2} + O(q^{5/2}), \\
& \Theta_3^4 = 1+8q^{1/2}+24q + 32q^{3/2}+24q^2+O(q^{5/2}), \\
& \Theta_4^4 = 1-8q^{1/2}+24q-32q^{3/2}+24q^2 + O(q^{5/2}).
\end{align*} 
From these $ q $-expansions one can observe the \ti{Jacobi identity}: $ \Theta_3^4 = \Theta_2^4 + \Theta_4^4 $. Additionally, $ \Theta_2^4 $, $ \Theta_3^4 $, and $ \Theta_4^4 $ generate the ring of holomorphic modular forms of weight $ k=2 $ for the congruence subgroup $ \Gamma(2)$ \cite{bruinier20081}. We normalize the \ti{modular discriminant} $ \Delta $, which can be written in several ways: 
\begin{align*}
\Delta(z) =  q \prod_{n \geq 1}(1-q^n)^{24} = \eta(z)^{24}= \sum_{n \geq 1} \tau(n)q^n =  \frac{E_4^3 - E_6^2}{1728}.
\end{align*}
The function $ \Delta $ is a weight 12 cusp form for $ SL_2(\ZZ) $ \cite{murty2015problems}. A salient point for our purposes is that $ \Delta(it)>0 $ for $ t>0 $. This can be seen upon setting $z=it$ in the above product expansion.

For $ x\in{\RR^d} $, the Fourier transform of a Gaussian function is 
\begin{align*}
e^{\pi i ||x||^2 z} \rightarrow (-i z)^{-d/2}e^{\pi i ||x||^2(-1/z)}.
\end{align*}
Thus, a natural way to construct Fourier eigenfuctions is to work with linear combinations of Gaussians - the continuous version being the Laplace transform. For $ r>\sqrt{2} $, let
\begin{align*}
a(r) = -4i \sin(\pi r^2/2)^2 \int_0^\infty \vp(it)e^{-\pi r^2 t} dt 
\end{align*}
which may be interpreted as the Laplace transform of a weighting function $ g(t) = \vp(it) $ evaluated at $ \pi r^2 $,  multiplied by the root forcing function $ -4i\sin(\pi r^2/2)^2 $. We derive conditions on $ \vp $ that make $ \widehat{a} = a $. We may interpret $ a(r) = a(x) $ for $ x\in{\RR^4} $ with $ ||x||=r $ since we are interested in radial functions \cite{bourgain2010principe,cohn2003new}. Letting $ z=it $ rotates the  integration to the positive imaginary axis giving
	\begin{align*}
	a(r) &= \int_0^{i \infty}\vp(z)e^{\pi i r^2(z+1)}dz + \int_0^{i \infty} \vp(z) e^{\pi i r^2 (z-1)}dz -2 \int_0^{i \infty} \vp(z)e^{\pi i r^2z}dz \\
	& = \int_1^{1+i \infty}\vp(z-1)e^{\pi i r^2 z}dz + \int_{-1}^{-1+i \infty} \vp(z+1) e^{\pi i r^2 z} dz - 2\int_0^{i \infty} \vp(z) e^{\pi i r^2z}dz.
	\end{align*}
	Addressing the convergence of the integral at $ i \infty $ and near the real line, if $ |\vp(it)|=O(e^{2 \pi t}) $ as $ t \rightarrow \infty $ and $ |\vp(it)| = O(e^{-\pi/t}) $ as $ t \rightarrow 0^+ $, we may apply Cauchy's theorem and write $ \int_1^{1+i \infty} + \int_{i \infty}^i + \int_i^1 = 0 $ and $ \int_{-1}^{-1+i \infty} + \int_{i \infty}^i + \int_i^{-1} = 0 $. The path of integration is chosen to be perpendicular to the real line at the endpoints $1$ and $-1$. If also $ \vp(z+2) = \vp(z) $, then
	\begin{align*}
	a(r) & = 2\int_i^{i \infty} (\vp(z+1)-\vp(z))e^{\pi i r^2 z} dz + \int_1^i \vp(z-1)e^{\pi i r^2 z} dz \\& + \int_{-1}^i \vp(z+1)e^{\pi i r^2 z} dz - 2\int_0^i\vp(z)e^{\pi i r^2z}dz.
	\end{align*}
	Taking the Fourier transform and maintaining the variable $ r $ (since the Fourier transform of a radial function is radial),  substitute $ \omega = \frac{-1}{z} $: 
	\begin{align*}
	\widehat{a}(r)  =  2 & \int_i^0(\vp(\tfrac{-1}{\omega}+1)-\vp(\tfrac{-1}{\omega}))i^{-d/2}\omega^{d/2-2}e^{\pi i r^2 \omega} d \omega \\
	 + & \int_{-1}^i \vp(\tfrac{-1}{\omega}-1)i^{-d/2}\omega^{d/2-2}e^{\pi i r^2 \omega} d \omega \\
	 + & \int_1^i \vp(\tfrac{-1}{\omega}+1)i^{-d/2}\omega^{d/2-2}e^{\pi i r^2 \omega} d \omega \\
	 -2 & \int_{i \infty}^i \vp(\tfrac{-1}{\omega})i^{-d/2}\omega^{d/2-2}e^{\pi i r^2 \omega} d \omega.
	\end{align*}
	Note that we are exchanging the Fourier transform with the contour integrals. The equality $ \widehat{a} = a $ can be achieved if equality happens at the level of the integrand functions. Comparing the terms in the integral $ \int_i^{i \infty} $ results in
	\begin{align}\label{functional equation (main) for vp}
	\vp(z+1) - \vp(z) = \vp(\tfrac{-1}{z})i^{-d/2}z^{d/2-2}.
	\end{align}
	Comparing terms in $ \int_0^i $ results in $ \vp(z) = (\vp(\tfrac{-1}{z}+1)-\vp(\tfrac{-1}{z}))i^{-d/2}z^{d/2-2} $, which is the same as (\ref{functional equation (main) for vp}) after inverting back $ z \mapsto \frac{-1}{z} $. Comparing terms in $ \int_1^i $, we get
	\begin{align}\label{functional eqn vp modified}
	\vp(z+1) = \vp(\tfrac{-1}{z}+1)i^{-d/2}z^{d/2-2}.
	\end{align}
	From (\ref{functional equation (main) for vp}), 
	\begin{align*}
	\vp(z+1) & = \vp(z) + \vp(\tfrac{-1}{z})i^{-d/2}z^{d/2-2} \\
	& = (\vp(\tfrac{-1}{z}+1)-\vp(\tfrac{-1}{z}))i^{-d/2}z^{d/2-2}+\vp(\tfrac{-1}{z})i^{-d/2}z^{d/2-2} \\
	& = \vp(\tfrac{-1}{z}+1)i^{-d/2}z^{d/2-2}.
	\end{align*}
	Thus (\ref{functional eqn vp modified}) follows from (\ref{functional equation (main) for vp}). The comparison in $ \int_{-1}^i $ is similar.

\begin{proposition}
	Let $ d=4 $ and let
	\begin{align}
	\vp = \frac{\Theta_4^{12}(\Theta_3^{12}+\Theta_2^{12})}{\Delta}.
	\end{align}
	Then, $ f_+(r) = i a(r) $ is a radial Schwartz function invariant under the Fourier transform and $ f_+ $ has sign radius $ r_{f_+} = \sqrt{2} $.
\end{proposition}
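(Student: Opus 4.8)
The plan is to check that the explicit $\vp=\Theta_4^{12}(\Theta_3^{12}+\Theta_2^{12})/\Delta$ satisfies every hypothesis used in the derivation preceding the proposition, and then to read off the analytic properties of $f_+$ from the contour representation of $a$. First I would record the structural facts. Since each $\Theta_j^4$ has weight $2$ and $\Delta$ has weight $12$, $\vp$ is a modular function of weight $0$, consistent with $i^{-d/2}z^{d/2-2}=-1$ at $d=4$; and since $\Delta$ is nonvanishing on $\HH$, $\vp$ is holomorphic there. Applying the $z\mapsto z+1$ laws ($\Theta_2^{12}\mapsto-\Theta_2^{12}$, $\Theta_3^{12}\leftrightarrow\Theta_4^{12}$, $\Delta\mapsto\Delta$) gives $\vp(z+1)=\Theta_3^{12}(\Theta_4^{12}-\Theta_2^{12})/\Delta$, whence $\vp(z+2)=\vp(z)$, while the $z\mapsto-1/z$ laws give $\vp(-1/z)=\Theta_2^{12}(\Theta_3^{12}+\Theta_4^{12})/\Delta$; subtracting yields $\vp(z+1)-\vp(z)=-\vp(-1/z)$, which is precisely (\ref{functional equation (main) for vp}) at $d=4$. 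The leading $q$-expansions (numerator $=1+O(q)$, $\Delta=q+O(q^2)$) give $\vp(it)=e^{2\pi t}+O(1)$ as $t\to\infty$, so $|\vp(it)|=O(e^{2\pi t})$; and the $z\mapsto-1/z$ substitution trades the $t\to0^+$ behaviour for that of $\Theta_2^{12}(\Theta_3^{12}+\Theta_4^{12})/\Delta=O(q^{1/2})$ at $i\infty$, so $|\vp(it)|=O(e^{-\pi/t})$ as $t\to0^+$. With all hypotheses of the preamble in force, the contour computation there gives $\widehat a=a$, and since multiplication by the constant $i$ commutes with $\mathcal{F}$, also $\widehat{f_+}=f_+$.

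Next I would show that $f_+$ is a real, radial Schwartz function. Because the defining Laplace integral for $a(r)$ converges only for $r>\sqrt2$, the real content here is the analytic continuation: rotating to $z=it$ and shifting contours as in the preamble writes $a$ as a sum of four integrals --- one over $[i,i\infty)$ with integrand $-2\vp(-1/z)e^{\pi i\|x\|^2z}$, which decays like $e^{-\pi t}e^{-\pi\|x\|^2t}$, and three over compact arcs joining $i$ to $1$, $-1$, $0$, along which the relevant integrand vanishes to infinite order at the endpoint on $\RR$. Each of these integrals converges absolutely and locally uniformly for $\|x\|^2$ in a complex neighbourhood of $\RR_{\ge0}$, so $f_+$ extends to a function holomorphic in $\|x\|^2$ there; hence $f_+$ is real-analytic on $\RR^4$ (in particular $C^\infty$, including at the origin), and, being real on $(\sqrt2,\infty)$ --- where $\vp(it)>0$ makes the Laplace integral manifestly positive --- it is real-valued everywhere. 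The same representation yields $|f_+(x)|=O(e^{-2\pi\|x\|})$, and differentiation only inserts harmless powers of $z$, so $f_+$ is Schwartz; radiality is built in.

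For the sign radius I would first prove $f_+\ge0$ on $\{\|x\|\ge\sqrt2\}$. The crucial (classical, but worth isolating) point is that $\vp(it)>0$ for all $t>0$: $\Theta_2(it)$, $\Theta_3(it)$, $\Delta(it)$ are sums or products of positive terms, and $\Theta_4(it)=t^{-1/2}\Theta_2(i/t)>0$ by the modular transformation, so $\vp(it)=\Theta_4(it)^{12}(\Theta_3(it)^{12}+\Theta_2(it)^{12})/\Delta(it)>0$. Hence for $r>\sqrt2$ we have $f_+(r)=4\sin^2(\pi r^2/2)\int_0^\infty\vp(it)e^{-\pi r^2t}\,dt\ge0$, and $f_+(\sqrt2)\ge0$ by continuity, giving $r_{f_+}\le\sqrt2$. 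To see $r_{f_+}$ is not smaller, I would compute the local behaviour at $r=\sqrt2$: writing $\vp(it)=\sum_{n\ge-2}b_ne^{-\pi nt}$ with $b_{-2}=1$ and $b_{-1}=0$ (both read off the $q$-expansion of $\vp$), one gets $\int_0^\infty\vp(it)e^{-\pi st}\,dt=\frac{1}{\pi(s-2)}+h(s)$ with $h$ analytic near $s=2$, and since $\sin^2(\pi s/2)=\frac{\pi^2}{4}(s-2)^2+O((s-2)^4)$ this gives $f_+=\pi(s-2)+O((s-2)^2)$ with $s=\|x\|^2$, valid on both sides of $s=2$ by the real-analyticity above. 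Thus $f_+(\sqrt2)=0$ and $f_+<0$ on a left neighbourhood $\{\sqrt{2-\delta}<\|x\|<\sqrt2\}$, so no $M<\sqrt2$ is admissible in the definition of $r_{f_+}$; therefore $r_{f_+}=\sqrt2$.

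I expect the main obstacle to be exactly the two steps that leave the region $r>\sqrt2$ on which the defining integral converges: establishing the Schwartz property, which rests entirely on the contour representation and its decay estimates, and pinning the sign radius to be \emph{exactly} $\sqrt2$ rather than merely proving $r_{f_+}\le\sqrt2$, which requires the local expansion $f_+=\pi(\|x\|^2-2)+O(\cdots)$ --- hence the precise low-order $q$-expansion coefficients of $\vp$ together with the analytic continuation across $\|x\|=\sqrt2$. Everything else reduces to a direct computation with the theta transformation laws.
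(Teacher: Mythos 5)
Your argument is correct, and its core coincides with the paper's: you verify the functional equation (\ref{functional equation (main) for vp}) at $d=4$ from the theta transformation laws, check the growth hypotheses needed for the contour computation in the preamble, and deduce nonnegativity of $f_+$ outside radius $\sqrt2$ from positivity of $\vp$ on the positive imaginary axis. Two differences are worth recording. First, where the paper proves $\vp\in M_0^!(\Gamma(2))$ by checking the generators of $\Gamma(2)$ and then cites the general theory of weakly holomorphic forms for the decay and Schwartz assertions, you instead read the two growth estimates $|\vp(it)|=O(e^{2\pi t})$ and $|\vp(it)|=O(e^{-\pi/t})$ directly off the $q$-expansions (\ref{q series / fourier expansion for vp}) and run the Schwartz argument through the contour representation; this is more self-contained and equally valid (note that positivity of $\vp(it)$ needs only that the $\Theta_j(it)$ are real, since the exponent $12$ is even, so your extra observation $\Theta_4(it)=t^{-1/2}\Theta_2(i/t)>0$ is harmless but not required). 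Second, and more substantively, the paper's proof establishes only $f_+(r)\ge0$ for $r>\sqrt2$, i.e.\ the inequality $r_{f_+}\le\sqrt2$; the matching lower bound is asserted but not argued there. Your local expansion $f_+=\pi(\|x\|^2-2)+O\bigl((\|x\|^2-2)^2\bigr)$, obtained from $b_{-2}=1$, $b_{-1}=0$ and the simple pole of the Laplace transform at $\|x\|^2=2$ (precisely the analytic continuation the paper records only after the proposition), shows $f_+<0$ just inside radius $\sqrt2$ and thus supplies the missing half of the equality $r_{f_+}=\sqrt2$. This is a genuine completion of the published proof rather than a deviation from it.
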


\begin{proof}
	Based on the functional equations for $ \Theta_2,\Theta_3, \Theta_4 $ and $ \Delta $, 
	\begin{align*}
	\vp(z+1) = \frac{\Theta_3^{12}(\Theta_4^{12}-\Theta_2^{12})}{\Delta}
	\end{align*}
	and
	\begin{align*}
	\vp(-1/z) = \frac{-z^6\Theta_2(z)^{12}(-z^6\Theta_3(z)^{12}-z^6\Theta_4(z)^{12})}{z^{12}\Delta(z)} = \frac{\Theta_2^{12}(\Theta_3^{12}+\Theta_4^{12})}{\Delta}.
	\end{align*}
	Thus, $ \vp $ satisfies (\ref{functional equation (main) for vp}) for $ d=4 $.
    To show that $\vp$ is a weight 0 weakly holomorphic modular form for the congruence subgroup $\Gamma(2)$, written $ \vp \in{M_0^!(\Gamma(2))}$, it suffices to check that $ \vp(\gamma z) = \vp(z) $ for a set of generators of $ \Gamma(2)$. $ \Gamma(2) $ is generated by $ \left(
	\begin{smallmatrix}
	1&2\\0&1\\
	\end{smallmatrix}\right) $, $ \left(
	\begin{smallmatrix}
	1&0\\2&1\\
	\end{smallmatrix}
	\right) $, and $ -I $. Observe that
	$ \vp\left( \left(
	\begin{smallmatrix}
	1&2\\0&1
	\end{smallmatrix}
	\right)z\right) =  \vp(z+2) = \vp(z) $ directly from the definitions of $\Delta, \Theta_2,\Theta_3$, and $\Theta_4 $. Using (\ref{functional equation (main) for vp}), 
	\begin{align*}
	\vp \left(
	\begin{pmatrix}
	1&0\\2&1\\
	\end{pmatrix}
	z \right) & = \vp\left(\frac{z}{2z+1}\right) = \vp\left(\frac{-1}{-1/z-2}\right) \\
	& = \vp \left( \frac{-1}{z}-2 \right) - \vp \left( \frac{-1}{z}-2+1 \right) \\
	& = \vp\left( \frac{-1}{z} \right) - \left( \vp \left( \frac{-1}{z} \right) - \vp\left(\frac{-1}{-1/z} \right)\right) = \vp(z).
	\end{align*}
	The first few Fourier coefficients of $ \vp $ are
	\begin{align}\label{q series / fourier expansion for vp}
	& \vp(z) = q^{-1}-24+4096q^{1/2}-98028q+O(q^{3/2}), \\
	& \vp(-1/z) = 8192q^{1/2} + O(q^{3/2}). \nonumber 
	\end{align}
	The asymptotics and the fact that $ f_+ $ is a Schwartz function follow directly from the growth rate of these Fourier coefficients since $ \vp $ is a weakly holomorphic modular form \cite{bruinier2002borcherds}. $ f_+ $ has sign radius $ \sqrt{2} $ since we may observe that for $ t>0 $, $ \Delta(it) > 0  $ and each $ \Theta_j(it) \in{\RR} $. Thus, $ f_+(r) \geq 0 $ for $ r > \sqrt{2} $.
	
\end{proof}

Based on the $ q $-series expansion (\ref{q series / fourier expansion for vp}), it is possible to write

\begin{align*}
a(r) & = -4i\sin(\pi r^2/2)^2 \int_0^\infty (e^{2\pi t}-24+O(e^{-\pi t}))e^{-\pi r^2 t}dt \\
& = -4i \sin(\pi r^2/2)^2 \left(\frac{1}{\pi(r^2-2)} - \frac{24}{\pi r^2} + \int_0^\infty (\vp(it)-e^{2\pi t}+24)e^{-\pi r^2 t}dt \right) \\
\end{align*}
which converges for all $ r \geq 0 $. In particular, $ f_+(0) = 0 $, so $ f_+ $ is not positive definite.

\section{a -1 eigenfunction and concluding remarks}

To construct a $ -1 $ eigenfunction, it is possible to begin with a similar function shape
\begin{align*}
b(r) = -4 \sin(\pi r^2/2)^2 \int_0^{i \infty} \phi(-1/z)z^{d/2-2}e^{\pi i r^2z}dz.
\end{align*}
Comparing integrand terms as before, we get
\begin{align}\label{quasi modular property}
\phi\left(\frac{-1}{z-1}\right)(z-1)^{d/2-2} + \phi \left( \frac{-1}{z+1} \right)(z+1)^{d/2-2} -2\phi\left(\frac{-1}{z}\right)z^{d/2-2} = 2 \phi(z).
\end{align}
So, the form given for $ b(r) $ enforces (\ref{quasi modular property}) when $ \widehat{b} = -b $. Using the Eisenstein series $ E_2(z) = 1-24 \sum_{n\geq 1} \sigma_1(n)q^n $ (which is usually introduced as a basic example of a quasi-modular form) and the transformation property
\begin{align*}
E_2\left(\frac{-1}{z}\right) = z^2E_2(z) + \frac{6z}{\pi i},
\end{align*}
a straightforward calculation shows that $ E_2 $ satisfies (\ref{quasi modular property}) for $ d=4 $. Thus, setting $ \phi = E_2 $ above gives a $ -1 $ eigenfunction also with sign radius $ \sqrt{2} $. So, $ b(r) $ provides a simple construction of an eigenfunction with the basic depth 1 Eisenstein series $ E_2 $ within the integral. As before, $ b(r) $ is not sharp since the numerical bounds from table 4.1 in \cite{cohn2019optimal} give $ A_-(4) \leq 1.204 < \sqrt{2} $. Furthermore, using the checkerboard lattice $ D_4 $ from \cite{conway2013sphere} with minimal radius $ \sqrt{2} $, the dual lattice $ D_4^* $ is homothetic to $ D_4 $ but will have minimal radius 1. We can scale $ D_4 $ so that $ D_4 $ and $D_4^* $ both have minimal radius $ 2^{1/4} $, however constructing eigenfunctions $ f:\RR^4 \rightarrow \RR $ of the above shape with sign radius $ 2^{1/4} $ appears to remain a difficult problem. 



\newpage
\bibliographystyle{plain}

\bibliography{references}

\begin{thebibliography}{10}

\bibitem{bourgain2010principe}
Jean Bourgain, Laurent Clozel, and Jean-Pierre Kahane.
\newblock Principe d’heisenberg et fonctions positives.
\newblock In {\em Annales de l'institut Fourier}, volume~60, pages 1215--1232,
  2010.

\bibitem{bruinier2002borcherds}
Jan~H Bruinier.
\newblock {\em Borcherds products on O (2, l) and Chern classes of Heegner
  divisors}.
\newblock Number 1780. Springer Science \& Business Media, 2002.

\bibitem{bruinier20081}
Jan~Hendrik Bruinier, Gerard Van~der Geer, G{\"u}nter Harder, and Don Zagier.
\newblock {\em The 1-2-3 of modular forms: lectures at a summer school in
  Nordfjordeid, Norway}.
\newblock Springer Science \& Business Media, 2008.

\bibitem{cohn2022three}
Henry Cohn, David de~Laat, and Andrew Salmon.
\newblock Three-point bounds for sphere packing.
\newblock {\em arXiv preprint arXiv:2206.15373}, 2022.

\bibitem{cohn2003new}
Henry Cohn and Noam Elkies.
\newblock New upper bounds on sphere packings i.
\newblock {\em Annals of Mathematics}, pages 689--714, 2003.

\bibitem{cohn2019optimal}
Henry Cohn and Felipe Gon{\c{c}}alves.
\newblock An optimal uncertainty principle in twelve dimensions via modular
  forms.
\newblock {\em Inventiones mathematicae}, 217(3):799--831, 2019.

\bibitem{cohn2017sphere}
Henry Cohn, Abhinav Kumar, Stephen Miller, Danylo Radchenko, and Maryna
  Viazovska.
\newblock The sphere packing problem in dimension $24$.
\newblock {\em Annals of Mathematics}, 185(3):1017--1033, 2017.

\bibitem{conway2013sphere}
John~Horton Conway and Neil James~Alexander Sloane.
\newblock {\em Sphere packings, lattices and groups}, volume 290.
\newblock Springer Science \& Business Media, 2013.

\bibitem{gonccalves2017hermite}
Felipe Gon{\c{c}}alves, Diogo~Oliveira e~Silva, and Stefan Steinerberger.
\newblock Hermite polynomials, linear flows on the torus, and an uncertainty
  principle for roots.
\newblock {\em Journal of Mathematical Analysis and Applications},
  451(2):678--711, 2017.

\bibitem{gonccalves2020new}
Felipe Gon{\c{c}}alves, Jo{\~a}o~PG Ramos, et~al.
\newblock New sign uncertainty principles.
\newblock {\em arXiv preprint arXiv:2003.10771}, 2020.

\bibitem{hales2005proof}
Thomas~C Hales.
\newblock A proof of the kepler conjecture.
\newblock {\em Annals of mathematics}, pages 1065--1185, 2005.

\bibitem{logan1983extremalII}
BF~Logan.
\newblock Extremal problems for positive-definite bandlimited functions. ii.
  eventually negative functions.
\newblock {\em SIAM Journal on Mathematical Analysis}, 14(2):253--257, 1983.

\bibitem{murty2015problems}
M~Ram Murty, Michael Dewar, and Hester Graves.
\newblock {\em Problems in the theory of modular forms}.
\newblock Springer, 2015.

\bibitem{radchenko2019fourier}
Danylo Radchenko and Maryna Viazovska.
\newblock Fourier interpolation on the real line.
\newblock {\em Publications math{\'e}matiques de l'IH{\'E}S}, 129(1):51--81,
  2019.

\bibitem{rolen2020note}
Larry Rolen and Ian Wagner.
\newblock A note on schwartz functions and modular forms.
\newblock {\em Archiv der Mathematik}, 115(1):35--51, 2020.

\bibitem{toth1943dichteste}
L~Fejes T{\'o}th.
\newblock {\"U}ber die dichteste kugellagerung.
\newblock {\em Math. Z}, 48(676-684), 1943.

\bibitem{viazovska2017sphere}
Maryna~S Viazovska.
\newblock The sphere packing problem in dimension 8.
\newblock {\em Annals of Mathematics}, pages 991--1015, 2017.

\end{thebibliography}

\end{document}